\documentclass[11pt,a4paper]{article}

\usepackage[utf8]{inputenc}
\usepackage[T1]{fontenc}
\usepackage[english]{babel}

\usepackage{lmodern}
\usepackage{microtype}
\usepackage[margin=1in]{geometry}

\usepackage{amsmath}
\usepackage{amssymb}
\usepackage{amsthm}

\usepackage[hidelinks]{hyperref}

\theoremstyle{plain}
\newtheorem{theorem}{Theorem}[section]
\newtheorem{lemma}[theorem]{Lemma}

\theoremstyle{definition}
\newtheorem{definition}[theorem]{Definition}
\newtheorem{example}[theorem]{Example}

\theoremstyle{remark}
\newtheorem{remark}[theorem]{Remark}

\title{
Maximizing Measures for Subadditive and\\
Almost-Additive Potentials on Countable-Alphabet Subshifts
}

\author{
Ricardo Freire%
\thanks{
The first author's research was financed, in part,
by the S\~ao Paulo Research Foundation (FAPESP),
grant \#23/07076-4.
}
\\[0.4em]
{\small Department of Mathematics, University of S\~ao Paulo (IME-USP)}
\\
{\small Brazil}
\\
{\small \texttt{rfreire@usp.br}}
\and
Juliano S. Gonschorowski
\\[0.4em]
{\small Department of Mathematics -- DAEXT}
\\
{\small Federal Technological University of Paran\'a (UTFPR)}
\\
{\small Brazil}
\\
{\small \texttt{julianod@utfpr.edu.br}}
}

\date{}

\begin{document}

\maketitle

\begin{abstract}
We prove the existence of maximizing measures for subadditive and
almost-additive families of upper semicontinuous, bounded-above
potentials $\mathcal F=\{f_n\}_{n\geq1}$ on closed subshifts over
countable alphabets. Under a coercivity condition on $f_1$, we prove
that every maximizing sequence, that is,
$\{\mu_k\}_{k\geq1}\subset\mathcal M_\sigma(X)$ satisfying
$\bar{\mathcal F}(\mu_k)\to\beta(\mathcal F)$, is uniformly tight,
preventing mass from escaping to arbitrarily large symbols.
As a consequence, a maximizing measure exists.
\end{abstract}

\medskip

\noindent
\textbf{Keywords:}
Ergodic optimization; maximizing measures;
countable-alphabet subshifts; subadditive potentials;
almost-additive potentials; coercivity.

\medskip

\noindent
\textbf{2020 Mathematics Subject Classification:}
Primary 37A05; Secondary 37B10.

\medskip

\section{Introduction}

Ergodic optimization studies the maximization of averages associated
with a dynamical system $\sigma:X\to X$ over the set of its invariant
probability measures $\mathcal M_\sigma(X)$. For a potential $f$, the fundamental
question is whether the value
\[
\sup_{\mu\in\mathcal M_\sigma(X)}\int_X f\,d\mu
\]
is attained by an invariant measure. For upper semicontinuous
potentials on compact spaces, existence follows from the weak-$*$
compactness of the space of invariant measures and the upper
semicontinuity of the integral functional. A comprehensive account of
the problems and methods in this area can be found in
\cite{jenkinson2019}.

Here, we consider the shift map $\sigma$ on subshifts $X$ over
countable alphabets.

Although a subshift over a countable alphabet may be compact, the
assumption that it is closed does not guarantee compactness. In the
noncompact case, sequences of measures may exhibit escape of mass, so
the direct compactness argument used for finite alphabets no longer
applies.

The present work is motivated, in particular, by the result of Bissacot
and Freire \cite{bissacotfreire2014} for a coercive potential with
bounded variation on an irreducible countable Markov shift. In addition
to ensuring the existence of maximizing measures, their result
provides information about the supports of these measures, placing
them within a subshift over a finite alphabet.

Garibaldi, Gomes, and Sobottka \cite{garibaldi2026} study the existence
of maximizing measures for upper semicontinuous potentials on shifts
over countable alphabets. Their approach uses \emph{blur shift spaces},
a compactification obtained by adding symbols associated with subsets
of the alphabet. The method applies to shifts that need not be Markov
and provides, in particular, a convex characterization and compactness
results for invariant probability measures on the compactified space.

Here, we consider a related problem: the
existence of measures maximizing the asymptotic averages of
subadditive and almost-additive families. We consider families of
potentials $\mathcal F=\{f_n\}_{n\geq1}$ defined on a closed subshift
$X\subset\mathbb N^{\mathbb N}$. In the subadditive case, we assume that
\[
f_{n+m}(x)\leq f_n(x)+f_m(\sigma^n x),
\qquad n,m\geq1,
\]
and investigate the existence of measures attaining the maximal
asymptotic value
\[
\beta(\mathcal F)
=
\sup_{\mu\in\mathcal M_\sigma(X)}
\lim_{n\to\infty}
\frac1n\int_X f_n\,d\mu.
\]

It is worth noting that the single-potential setting motivating our
work naturally gives rise to the additive family
$\{S_nf\}_{n\geq1}$, where
$S_nf=\sum_{j=0}^{n-1}f\circ\sigma^j$ is the Birkhoff sum of $f$.
This is the familiar additive case, which is included in both the
subadditive and almost-additive settings.

We allow closed subshifts that need not be Markov or irreducible, and
replace the bounded variation assumption with upper semicontinuity
and boundedness from above of the potentials. Coercivity is required
only for the first member of the family, although this implies
coercivity for all members.

Broadly speaking, the method developed in this article treats the
asymptotic averages of subadditive and almost-additive families and
establishes the existence of maximizing measures directly on the
original subshift, using a coercivity condition on $f_1$ and a uniform
tightness argument arising from this condition. Our existence
criterion combines upper semicontinuity and boundedness from above
of each $f_n$ with a coercivity assumption imposed only on $f_1$:
\[
\sup_{x\in[i]_X}f_1(x)\longrightarrow-\infty
\qquad\text{as }i\longrightarrow\infty.
\]

Our main result, which follows immediately from the theorems in the
subsequent sections, is stated as follows.

\begin{theorem}\label{thm:main}
Let $X\subset\Sigma$ be a closed subshift. Let
$\mathcal F=\{f_n\}_{n\geq1}$ be a subadditive or almost-additive family
of upper semicontinuous functions that are bounded above.
Suppose that $f_1$ is coercive and that
\[
\beta(\mathcal F)>-\infty.
\]
Then there exists $\mu_\infty\in\mathcal M_\sigma(X)$ such that
\[
\bar{\mathcal F}(\mu_\infty)=\beta(\mathcal F).
\]
\end{theorem}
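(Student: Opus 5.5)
We take a maximizing sequence $\{\mu_k\}_{k\geq1}\subset\mathcal M_\sigma(X)$ with $\bar{\mathcal F}(\mu_k)\to\beta(\mathcal F)$. We show it is uniformly tight, extract a weak-$*$ limit $\mu_\infty\in\mathcal M_\sigma(X)$, and prove that $\bar{\mathcal F}$ is upper semicontinuous along this sequence. Here $\bar{\mathcal F}(\mu)=\lim_n\frac1n\int f_n\,d\mu$, which in the subadditive case equals $\inf_n\frac1n\int f_n\,d\mu$ by Fekete's lemma.

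\emph{Step 1: a uniform bound on $\int f_1$.}
\begin{itemize}
\item In the subadditive case, $f_n\le S_nf_1$. By invariance this gives $\bar{\mathcal F}(\mu)\le\int f_1\,d\mu$.
\item In the almost-additive case, $f_{n+m}\le f_n+f_m\circ\sigma^n+C$. Iterating gives $f_n\le S_nf_1+(n-1)C$, hence $\bar{\mathcal F}(\mu)\le\int f_1\,d\mu+C$.
\end{itemize}
Since $\beta(\mathcal F)>-\infty$, we may assume $\bar{\mathcal F}(\mu_k)\ge\beta(\mathcal F)-1$ for all $k$. Then $\int f_1\,d\mu_k\ge\beta(\mathcal F)-1-C=:-L$ uniformly in $k$.

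\emph{Step 2: tightness.} Let $M=\sup f_1<\infty$ and $a_i=\sup_{[i]_X}f_1$, so $a_i\to-\infty$. Splitting the integral over the cylinders,
\[
-L\le\int f_1\,d\mu_k\le M+\sum_{i\geq N}(a_i-M)\,\mu_k([i]_X).
\]
Hence $\mu_k(\bigcup_{i\ge N}[i]_X)\le (M+L)/\min_{i\geq N}(M-a_i)=:\varepsilon_N\to0$, uniformly in $k$. By shift invariance the same bound holds for the $j$-th coordinate. Choose $N_j$ with $\varepsilon_{N_j}<\varepsilon2^{-j-1}$. Then the set $K=X\cap\prod_j\{1,\dots,N_j\}$ is compact, because it is closed in a product of finite sets, and $\mu_k(X\setminus K)<\varepsilon$ for all $k$.

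By Prokhorov's theorem a subsequence converges weakly to a probability $\mu_\infty$. It is supported on $X$ because $X$ is closed, and it is $\sigma$-invariant because $\sigma$ is continuous.

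\emph{Step 3: upper semicontinuity.} This is the main obstacle, since each $f_n$ is only usc and bounded above, and $\bar{\mathcal F}$ is a limit of the functionals $\mu\mapsto\frac1n\int f_n\,d\mu$, not an infimum of them in general.

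For fixed $n$, we use that $f_n$ is usc and bounded above on a metric space. Then $f_n$ is a decreasing limit of continuous bounded-above functions $g_j$ (Baire), and $\int g_j\,d\mu$ is continuous under weak convergence for bounded $g_j$. Truncate $g_j$ below at $-R$, which is harmless because it only increases the integrals. This gives $\limsup_k\int f_n\,d\mu_k\le\int f_n\,d\mu_\infty$, with the value $-\infty$ allowed.

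The subadditive case then follows: $\bar{\mathcal F}(\mu_\infty)=\inf_n\frac1n\int f_n\,d\mu_\infty\ge\inf_n\limsup_k\frac1n\int f_n\,d\mu_k\ge\limsup_k\bar{\mathcal F}(\mu_k)=\beta(\mathcal F)$.

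In the almost-additive case, the family $\{f_n+C\}_{n\geq1}$ is subadditive and defines the same functional $\bar{\mathcal F}$. Hence $\bar{\mathcal F}(\mu)=\inf_n\frac1n\int(f_n+C)\,d\mu$, and the same argument applies.

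Finally, $\bar{\mathcal F}(\mu_\infty)\le\beta(\mathcal F)$ holds trivially, so $\mu_\infty$ is a maximizing measure.
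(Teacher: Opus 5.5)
Your proposal is correct and follows essentially the same route as the paper: a uniform lower bound on $\int f_1\,d\mu_k$, coercivity plus shift-invariance to obtain a compact set $X\cap\prod_j F_j$ carrying uniform mass, Prokhorov's theorem, upper semicontinuity of each functional $\mu\mapsto\frac1n\int f_n\,d\mu$ and of their infimum, and the almost-additive case handled by passing to the subadditive family $f_n+C$. One small indexing slip: the alphabet is $\mathbb N=\{0,1,2,\dots\}$ and your estimate controls the sets $\{x_j\ge N_j\}$, so the compact set should be $X\cap\prod_j\{0,\dots,N_j-1\}$ rather than $X\cap\prod_j\{1,\dots,N_j\}$, which also discards the uncontrolled sets $\{x_j=0\}$.
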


It is important to emphasize that our argument establishes existence
but does not locate the supports of maximizing measures within a
finite-alphabet subshift. In particular, additional assumptions on
the dynamics, such as transitivity of $\sigma$, and on the regularity
of the family appear to be necessary for such a conclusion.

Almost-additive families on countable Markov shifts are also studied
by Iommi and Yayama \cite{iommiyayama2012} in the context of
thermodynamic formalism. Under combinatorial assumptions such as
BIP (Big Images and Preimages), these authors obtain results concerning Gibbs measures. Our
objective is different: to establish a direct criterion for the
existence of maximizing measures, based on the uniform tightness of
maximizing sequences, without invoking the BIP property.

Sequences of nonadditive potentials arise naturally, for example, in
problems involving matrix products and growth exponents. More
specifically, Morris~\cite{morris2010} studied the joint spectral radius
of sets of matrices, a quantity describing the maximal asymptotic
growth of their products. In section 5, we show that
our results apply to products of matrices indexed by a countable
alphabet, yielding invariant measures that realize the joint spectral
radius of such families.

Barreira~\cite{barreira2006} developed a nonadditive thermodynamic
formalism in which the pressure of a single potential is replaced
by the pressure of a sequence of potentials. In particular, his
work treats almost-additive sequences, establishes a variational
principle, and studies the existence and uniqueness of equilibrium
 and Gibbs measures.

More recently, Cao~\cite{cao2025} studied equilibrium states and
zero-temperature limits for almost-additive potentials on
topologically mixing countable Markov shifts without assuming
the BIP property. His results involve summability and bounded
variation assumptions and provide, among other conclusions,
the existence of accumulation points of equilibrium states as
the temperature tends to zero, which are maximizing measures. In this way, our result generalizes theorem 1.3 of Cao to a broader class of subadditive families, and provides a direct proof, without requiring regularity assumptions for the existence of equilibrium states.

Davie, Urba\'nski, and Zdunik~\cite{davieurbanskizdunik2007}
studied the maximization of integrals of continuous potentials
on metrizable noncompact spaces over prescribed closed families
of probability measures. Their work provides an important
noncompact framework and treats continuous potentials satisfying
an escape-to-minus-infinity condition.

Cuneo~\cite{cuneo2020} proved that, for a continuous
transformation $T\colon X\to X$ on a compact metric space, every
asymptotically additive sequence of potentials
$(f_n)_{n\geq1}\subset C(X)$ admits an additive representation with
uniformly sublinear error. More precisely, there exists $f\in C(X)$
such that
\begin{equation}
    \lim_{n\to\infty}\frac{1}{n}\|f_n-S_nf\|_\infty=0,
    \qquad
    S_nf=\sum_{k=0}^{n-1}f\circ T^k.
\end{equation}
The result applies, in particular, to almost-additive sequences, for
which there exists a constant $C\geq0$ satisfying
\begin{equation}
    \|f_{n+m}-f_n-f_m\circ T^n\|_\infty\leq C,
    \qquad n,m\geq1.
\end{equation}
This representation makes it possible to relate properties of these
sequences to those of additive potentials, including topological
pressure, equilibrium states, and weak Gibbs measures. The author
also extended the representation result to topological spaces that
are not necessarily compact: if $T$ is continuous and the sequence
satisfies the asymptotically additive approximation condition in the
global uniform norm, then there exists a continuous potential with
uniformly sublinear error, even when the terms $f_n$ are discontinuous.
For continuous almost-additive sequences, this condition is also
satisfied.

It is worth noting that, even in the almost-additive case, it is not
clear that Cuneo's result \cite{cuneo2020} guarantees the existence of
maximizing measures in these settings, since his argument does not
itself ensure the regularity we require in the noncompact case. For
example, if the potential obtained in Cuneo's article could be shown
to have summable variation and to be coercive, then we could apply
the result of Bissacot and Freire \cite{bissacotfreire2014}; however,
this conclusion is not immediate.

Thus, our results generalize previous existence results for
maximizing measures to this broader setting: our method places
very few requirements on the type of shift, requiring only invariance
of the space, avoids strong assumptions (such as BIP conditions), and
requires weak regularity assumptions on the family. Moreover, it
applies beyond the additive setting.

Nevertheless, interesting
questions remain, especially concerning the location of maximizing
measures and the possibility of weakening the coercivity condition.

This article is organized as follows. In the next section, we give
the precise definitions and the setting of our results. In
Section~\ref{sec:tightness}, we prove the lemma containing our main
argument in the subadditive case. The main theorem is proved in
Section~\ref{sec:existence}. In Section \ref{sec:matrices} we present an application to products of matrices. Finally, in Section~\ref{sec:almost}, we treat the almost-additive case by reducing it to the subadditive case.

\section{Preliminaries}

\subsection{Configuration Space}
\label{subsec:espaco}

Let $\mathbb N=\{0,1,2,\dots\}$ be the set of natural numbers,
regarded as a countably infinite alphabet endowed with the discrete
topology, and let $\Sigma=\mathbb N^{\mathbb N}$ be the space of
sequences. We equip $\Sigma$ with the metric
\[
d(x,y)=
\begin{cases}
0, & \text{if }x=y,\\[2mm]
2^{-\min\{k\geq0:x_k\neq y_k\}}, & \text{if }x\neq y,
\end{cases}
\]
which induces the product topology. The space $\Sigma$ is Polish,
that is, complete and separable. On $\Sigma$, we consider the
classical \emph{shift} map $\sigma:\Sigma\to\Sigma$, defined by
$\sigma(x_0x_1x_2\dots)=(x_1x_2x_3\dots)$ for
$x=(x_0x_1x_2\dots)\in\Sigma$. Let $X\subset\Sigma$ be a closed
subshift, that is, a closed invariant subset satisfying
\[
\sigma(X)\subseteq X.
\]
Then the restriction $\sigma|_X\colon X\to X$ is continuous, and we
continue to denote it simply by $\sigma$. We denote by
$\mathcal M_\sigma(X)$ the set of $\sigma$-invariant Borel
probability measures on $X$. We adopt the convention that the
supremum of an empty set of values in the extended real line
is $-\infty$.

\subsection{Subadditive Potentials}

\begin{definition}
A family $\mathcal F=\{f_n\}_{n\geq1}$, with
$f_n:X\to\mathbb R\cup\{-\infty\}$, is subadditive if
$f_{n+m}\leq f_n+f_m\circ\sigma^n$. We assume that each $f_n$
is upper semicontinuous (USC) and bounded above, with
$M_n=\sup_{x\in X}f_n(x)<\infty$.
\end{definition}

For $\mu\in\mathcal M_\sigma(X)$, the subadditivity of
$\mathcal F$, the invariance of $\mu$, and the extended
Fekete lemma yield
\[
\lim_{n\to\infty}
\frac{1}{n}\int_Xf_n\,d\mu
=
\inf_{n\geq1}
\frac{1}{n}\int_Xf_n\,d\mu
\in[-\infty,\infty).
\]
We denote this value by $\bar{\mathcal F}(\mu)$ and define
\[
\beta(\mathcal F)
=
\sup_{\mu\in\mathcal M_\sigma(X)}
\bar{\mathcal F}(\mu).
\]
In the subadditive case, we assume that
\[
\beta(\mathcal F)>-\infty.
\]
In particular, this assumption implies that
$\mathcal M_\sigma(X)\neq\varnothing$.

\subsection{Coercivity}

For each symbol $i\in\mathbb N$, we define the relative
first-level cylinder by
\[
[i]_X=\{x\in X:x_0=i\}.
\]
The set $[i]_X$ may be empty; in this case, we adopt the
convention
\[
\sup_{x\in[i]_X}f(x)=-\infty.
\]
We say that a potential
$f\colon X\to\mathbb R\cup\{-\infty\}$ is coercive if
\begin{equation}\label{eq:coerc}
\lim_{i\to\infty}\left(\sup_{x\in[i]_X}f(x)\right)=-\infty.
\end{equation}

\begin{example}[Escape of mass without coercivity]\label{ex:escape}
On the full shift $X=\mathbb N^{\mathbb N}$, define
\[
f(x)=-\frac{1}{x_0+1}
\quad\text{and}\quad
f_n(x)=\sum_{j=0}^{n-1}f(\sigma^j x).
\]
This is an additive family of continuous potentials bounded
above, but $f_1=f$ is not coercive.

For every invariant probability measure $\mu$,
\[
\bar{\mathcal F}(\mu)=\int f\,d\mu<0,
\]
since $f(x)<0$ at every point. On the other hand, if $\mu_i$
is the measure concentrated at the fixed point $(i,i,\ldots)$,
then
\[
\bar{\mathcal F}(\mu_i)=-\frac{1}{i+1}\longrightarrow0.
\]
Thus, $\beta(\mathcal F)=0$, but no maximizing measure exists.
The sequence $(\mu_i)$ exhibits escape of mass.
\end{example}

\section{Uniform Tightness}\label{sec:tightness}

The preceding example shows that, without a condition controlling
large symbols, a maximizing sequence may exhibit escape of mass.
The following lemma shows that the coercivity of $f_1$ provides
this control in the subadditive case.

\begin{lemma}[Uniform tightness in the subadditive case]
\label{lem:tightness}
Let $\mathcal F=\{f_n\}_{n\geq1}$ be a subadditive family of
upper semicontinuous functions bounded above, and suppose that
$f_1$ is coercive. If
$\{\mu_k\}\subset\mathcal M_\sigma(X)$ satisfies
\[
\liminf_{k\to\infty}\bar{\mathcal F}(\mu_k)>-\infty,
\]
then the family $\{\mu_k\}$ is uniformly tight.
\end{lemma}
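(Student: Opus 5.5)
The plan is to use the Fekete characterization $\bar{\mathcal F}(\mu)=\inf_n \frac1n\int f_n\,d\mu$ only at level $n=1$, which gives $\bar{\mathcal F}(\mu_k)\le \int_X f_1\,d\mu_k$. Dropping finitely many initial terms, which does not affect tightness, we may fix $L\in\mathbb R$ with $\int_X f_1\,d\mu_k\ge L$ for all $k$. Let $M_1=\sup f_1<\infty$. For $N\in\mathbb N$ set
\[
c_N=\sup_{i\ge N}\sup_{x\in[i]_X} f_1(x),
\]
which tends to $-\infty$ as $N\to\infty$ by coercivity \eqref{eq:coerc}. Splitting $X$ into $A_N=\{x\in X:x_0<N\}$ and its complement, we get
\[
L\le \int f_1\,d\mu_k\le M_1\,\mu_k(A_N)+c_N\,\mu_k(A_N^c).
\]
If $c_N<\min(0,M_1)$, this gives $\mu_k(A_N^c)\le (M_1-L)/(M_1-c_N)$, uniformly in $k$, and the bound tends to $0$ as $N\to\infty$. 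If $M_1<0$, the bound simplifies further.

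Next we pass from control of the first coordinate to control of every coordinate using invariance. Since $\mu_k\circ\sigma^{-j}=\mu_k$, we have $\mu_k(\{x:x_j\ge N\})=\mu_k(A_N^c)$ for every $j$. Given $\varepsilon>0$, we choose $N_j$ with $\sup_k\mu_k(\{x_0\ge N_j\})<\varepsilon 2^{-j-1}$. Then
\[
K=X\cap\prod_{j\ge0}\{0,\dots,N_j-1\}
\]
is compact: it is a closed subset of the compact product of finite sets, and it is closed in $X$ because $X$ is closed in $\Sigma$. Moreover,
\[
\mu_k(X\setminus K)\le\sum_{j}\mu_k(\{x_j\ge N_j\})<\varepsilon \quad\text{for all } k,
\]
which is the required uniform tightness.

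The main subtlety is the first step. We must check that $\int f_1\,d\mu_k$ is well defined, possibly equal to $-\infty$; this holds because $f_1$ is bounded above and Borel by upper semicontinuity. We must also make sure the lower bound actually transfers to level $1$. This is where we use that the liminf hypothesis forces $\bar{\mathcal F}(\mu_k)>-\infty$ eventually, and that the infimum in the Fekete formula dominates it at $n=1$. No upper semicontinuity beyond measurability is needed for tightness itself.
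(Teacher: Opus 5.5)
Your proposal is correct and follows essentially the same route as the paper. You use the $n=1$ term of the Fekete infimum to get a uniform lower bound on $\int_X f_1\,d\mu_k$, then use coercivity to make $\mu_k(\{x_0\ge N_j\})<\varepsilon 2^{-j-1}$ uniformly in $k$, transfer this to every coordinate by invariance, and take the compact set $X\cap\prod_j\{0,\dots,N_j-1\}$; the only step you leave implicit is why discarding finitely many $\mu_k$ is harmless, namely that each Borel probability measure on the Polish space $X$ is individually tight, which the paper makes explicit by adjoining compacts $C_1,\dots,C_{N_0-1}$.
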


\begin{proof}
Let $L\in\mathbb R$ satisfy
$L<\liminf_{k\to\infty}\bar{\mathcal F}(\mu_k)$.
By the definition of the limit inferior, there exists an
index $N_0\geq1$ such that
$\bar{\mathcal F}(\mu_k)>L$ for every $k\geq N_0$.

Since
\[
\bar{\mathcal F}(\mu_k)
=
\inf_{n\geq1}\frac{1}{n}\int_Xf_n\,d\mu_k,
\]
the term corresponding to $n=1$ yields
\[
\int_Xf_1\,d\mu_k
\geq
\bar{\mathcal F}(\mu_k)
>
L.
\]

Therefore, since $f_1$ is bounded above and the integral is
strictly greater than $L$, the integral is a real number
for these measures.

Since
\[
\int_Xf_1\,d\mu_k>L
\]
for $k\geq N_0$, the function $f_1$ is not identically
$-\infty$. As it is bounded above, it follows that
\[
M:=\sup_{x\in X}f_1(x)\in\mathbb R.
\]
Define
\[
K_0=|M|+|L|+1>0.
\]

Given $\varepsilon>0$, the coercivity condition
\eqref{eq:coerc} on $f_1$ ensures that, for each
$j\in\mathbb N$, there exists a finite set of symbols
$F_j\subset\mathbb N$ such that, for every
$x\in B_j=\bigcup_{i\notin F_j}[i]_X$, we have
\[
f_1(x)\leq-C_j,
\qquad
\text{where }
C_j=\frac{2^{j+1}K_0}{\varepsilon}+|M|+1.
\]

Define $p_k(j)=\mu_k(B_j)$. Decomposing the domain of
integration and using the fact that $f_1\leq M$ on $X$,
we obtain
\begin{align*}
\int_Xf_1\,d\mu_k
&=
\int_{X\setminus B_j}f_1\,d\mu_k
+
\int_{B_j}f_1\,d\mu_k\\
&\leq
M(1-p_k(j))-C_jp_k(j)
=
M-p_k(j)(M+C_j).
\end{align*}
Since $\int_Xf_1\,d\mu_k>L$, we deduce that
$L<M-p_k(j)(M+C_j)$. Note that the numerator satisfies
$M-L\leq|M|+|L|<|M|+|L|+1=K_0$. Furthermore,
$M+C_j=M+|M|+\frac{2^{j+1}K_0}{\varepsilon}+1
\geq\frac{2^{j+1}K_0}{\varepsilon}+1$.
Solving for $p_k(j)$, we obtain
\[
p_k(j)
<
\frac{M-L}{M+C_j}
\leq
\frac{|M|+|L|}
{\frac{2^{j+1}K_0}{\varepsilon}+1}
<
\frac{K_0}{\frac{2^{j+1}K_0}{\varepsilon}}
=
\frac{\varepsilon}{2^{j+1}}.
\]

Since $(\sigma^n x)_0=x_n$, we have
\[
A_n
=
\{x\in X:x_n\notin F_n\}
=
\sigma^{-n}(B_n).
\]
By the invariance of $\mu_k$,
\[
\mu_k(A_n)
=
\mu_k\bigl(\sigma^{-n}(B_n)\bigr)
=
\mu_k(B_n)
<
\frac{\varepsilon}{2^{n+1}}.
\]

Define the set
\[
K_\varepsilon
=
\{x\in X:x_n\in F_n\text{ for every }n\geq0\}
=
X\cap\prod_{n=0}^{\infty}F_n.
\]
Since each factor $F_n$ is finite,
$\prod_{n=0}^{\infty}F_n$ is compact in $\Sigma$ by
Tychonoff's theorem. As $X$ is closed, $K_\varepsilon$
is compact in $X$. We estimate the measure of its complement:
\[
\mu_k((K_\varepsilon)^c)
\leq
\sum_{n=0}^{\infty}\mu_k(A_n)
<
\sum_{n=0}^{\infty}\frac{\varepsilon}{2^{n+1}}
=
\varepsilon.
\]
Consequently, $\mu_k(K_\varepsilon)>1-\varepsilon$ for every
$k\geq N_0$.

Since $X$ is Polish, every Borel probability measure on $X$
is tight. Therefore, for each $1\leq k<N_0$, there exists
a compact set $C_k\subset X$ such that
\[
\mu_k(C_k)>1-\varepsilon.
\]

Defining the compact set
$K=K_\varepsilon\cup C_1\cup\dots\cup C_{N_0-1}$,
we obtain $\mu_k(K)>1-\varepsilon$ for every $k\geq1$,
which establishes the uniform tightness of the sequence.
\end{proof}

\section{Existence of a Maximizing Measure}\label{sec:existence}

\begin{theorem}\label{thm:subadditive}
Let $X\subset\Sigma$ be a closed subshift. Let
$\mathcal F=\{f_n\}_{n\geq1}$ be a subadditive family of
upper semicontinuous functions bounded above.
Suppose that $f_1$ is coercive and that
\[
\beta(\mathcal F)>-\infty.
\]
Then there exists $\mu_\infty\in\mathcal M_\sigma(X)$ such that
\[
\bar{\mathcal F}(\mu_\infty)=\beta(\mathcal F).
\]
\end{theorem}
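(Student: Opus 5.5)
We take a maximizing sequence $\{\mu_k\}\subset\mathcal M_\sigma(X)$ with $\bar{\mathcal F}(\mu_k)\to\beta(\mathcal F)$. Such a sequence exists by the definition of $\beta(\mathcal F)$ as a supremum, and if $\beta(\mathcal F)=+\infty$ we still pick $\bar{\mathcal F}(\mu_k)\to\beta(\mathcal F)$. In fact $\beta(\mathcal F)\le M_1<\infty$, since $\bar{\mathcal F}(\mu)\le\int f_1\,d\mu\le M_1$. Since $\beta(\mathcal F)>-\infty$, we have $\liminf_k\bar{\mathcal F}(\mu_k)>-\infty$, so Lemma~\ref{lem:tightness} shows that $\{\mu_k\}$ is uniformly tight. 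The space $X$ is closed in the Polish space $\Sigma$, hence Polish, so Prokhorov's theorem gives a subsequence, still denoted $\mu_k$, converging weakly to a Borel probability measure $\mu_\infty$ on $X$.

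Next we check that $\mu_\infty$ is invariant. Because $\sigma$ is continuous on $X$, for every bounded continuous $g$ the function $g\circ\sigma$ is bounded continuous. Then $\int g\circ\sigma\,d\mu_\infty=\lim\int g\circ\sigma\,d\mu_k=\lim\int g\,d\mu_k=\int g\,d\mu_\infty$, so $\mu_\infty\in\mathcal M_\sigma(X)$.

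The key step is upper semicontinuity of $\mu\mapsto\bar{\mathcal F}(\mu)$ along the subsequence. Fix $n\ge1$. The function $f_n$ is USC and bounded above by $M_n$. On a metric space such a function is the pointwise decreasing limit of bounded continuous functions $g_m$ (for instance the Lipschitz approximations $g_m(x)=\sup_y\{\max(f_n(y),-m)-m\,d(x,y)\}$). The portmanteau theorem extended to USC bounded-above functions then gives
\[
\limsup_{k\to\infty}\int_X f_n\,d\mu_k\le\int_X f_n\,d\mu_\infty .
\]
Indeed, $\int f_n\,d\mu_k\le\int g_m\,d\mu_k\to\int g_m\,d\mu_\infty$, and monotone convergence from above applies, which is legitimate because $g_m\le g_1$ is bounded. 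This holds even when the right-hand side equals $-\infty$.

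Combining this with the Fekete formula $\bar{\mathcal F}(\mu)=\inf_n\frac1n\int f_n\,d\mu$, for each $n$ we get
\[
\beta(\mathcal F)=\lim_k\bar{\mathcal F}(\mu_k)\le\limsup_k\frac1n\int f_n\,d\mu_k\le\frac1n\int f_n\,d\mu_\infty .
\]
Taking the infimum over $n$ yields $\beta(\mathcal F)\le\bar{\mathcal F}(\mu_\infty)$. The reverse inequality holds by definition of $\beta(\mathcal F)$, which proves the theorem.

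The main obstacle is the semicontinuity step. Each functional $\mu\mapsto\frac1n\int f_n\,d\mu$ is only USC, not continuous, but an infimum of USC functionals is USC, so the Fekete inf-representation is exactly what makes the argument work. Care is needed only with possibly $-\infty$ values of $f_n$ and with the use of the approximation on the noncompact space $X$. Tightness enters solely to produce the weak limit $\mu_\infty$ as a probability measure with no mass lost.
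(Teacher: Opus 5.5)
Your proposal is correct and follows essentially the same route as the paper: a maximizing sequence, uniform tightness from Lemma~\ref{lem:tightness}, Prokhorov's theorem, invariance of the limit via bounded continuous test functions, upper semicontinuity of each $\mu\mapsto\frac1n\int f_n\,d\mu$ by decreasing continuous approximation plus monotone convergence, and finally the infimum over $n$. Your explicit Lipschitz approximation and the sequential inequality chain spell out the ``extended Portmanteau'' step that the paper cites without proof.
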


\begin{proof}
By the definition of the supremum
$\beta(\mathcal F)=
\sup_{\mu\in\mathcal M_\sigma(X)}\bar{\mathcal F}(\mu)$,
there exists a maximizing sequence
$\{\mu_k\}_{k\geq1}\subset\mathcal M_\sigma(X)$ satisfying
$\bar{\mathcal F}(\mu_k)\to\beta(\mathcal F)$.

Since this sequence satisfies the limit condition in
Lemma~\ref{lem:tightness}, it is uniformly tight. By
Prohorov's theorem, we extract a subsequence, denoted by
$\mu_{k_j}\Rightarrow\mu_\infty$.

To prove the invariance of the limiting measure, let
$\varphi\in C_b(X)$. Since $\sigma\colon X\to X$ is
continuous, we have $\varphi\circ\sigma\in C_b(X)$.
By the invariance of each $\mu_{k_j}$,
\[
\int_X\varphi\circ\sigma\,d\mu_{k_j}
=
\int_X\varphi\,d\mu_{k_j}.
\]
Passing to the limit using weak convergence of the measures,
we obtain
\[
\int_X\varphi\circ\sigma\,d\mu_\infty
=
\int_X\varphi\,d\mu_\infty.
\]
Since this equality holds for every $\varphi\in C_b(X)$,
the measure $\mu_\infty$ is $\sigma$-invariant. Therefore,
\[
\mu_\infty\in\mathcal M_\sigma(X).
\]
For each $n\geq1$, define
\[
\Phi_n(\mu)
=
\frac{1}{n}\int_Xf_n\,d\mu.
\]
Since $f_n$ is upper semicontinuous and bounded above, the
extended version of the Portmanteau theorem implies, via the monotone convergence theorem, that the
functional
\[
\Phi_n(\mu)=\frac{1}{n}\int_Xf_n\,d\mu
\]
is upper semicontinuous in the topology of weak convergence.
In particular,
\[
\Phi_n(\mu_\infty)
\geq
\limsup_{j\to\infty}\Phi_n(\mu_{k_j}).
\]

Since
\[
\bar{\mathcal F}(\mu)
=
\inf_{n\geq1}\Phi_n(\mu),
\]
and the infimum of a family of upper semicontinuous functions
is upper semicontinuous, we conclude that
$\bar{\mathcal F}$ is upper semicontinuous. Hence,
\[
\bar{\mathcal F}(\mu_\infty)
\geq
\limsup_{j\to\infty}
\bar{\mathcal F}(\mu_{k_j})
=
\beta(\mathcal F).
\]
On the other hand, by the definition of $\beta(\mathcal F)$,
\[
\bar{\mathcal F}(\mu_\infty)
\leq
\beta(\mathcal F).
\]
Therefore,
\[
\bar{\mathcal F}(\mu_\infty)
=
\beta(\mathcal F).
\]
\end{proof}

\section{Application to products of matrices}\label{sec:matrices}

Locally constant linear cocycles provide the prototypical example of
subadditive families in ergodic optimization. See, e.g., \cite{morris2010}
for finite alphabets and \cite{iommiyayama2012} for the almost-additive
positive-matrix case on countable Markov shifts. Theorem~\ref{thm:subadditive}
allows us to treat countable families of arbitrary matrices without positivity
or Markov assumptions, provided the norms decay to zero.

Fix $d \ge 1$ and a sequence $(A_i)_{i\in\mathbb{N}}$ of real $d\times d$
matrices. Let $\|\cdot\|$ be a submultiplicative matrix norm and assume that
\begin{equation}\label{eq:matrix-coercive}
  \lim_{i\to\infty} \|A_i\| = 0 .
\end{equation}
Let $X\subset\Sigma$ be a closed subshift. For $x\in X$ and $n\ge 1$, define
\[
  f_n(x) = \log \bigl\| A_{x_{n-1}} A_{x_{n-2}} \cdots A_{x_0} \bigr\| ,
\]
with the convention $\log 0 = -\infty$, and let
$\mathcal F=\{f_n\}_{n\ge1}$.

For $n,m\ge1$ we have
\[
  A_{x_{n+m-1}}\cdots A_{x_0}
  = \bigl(A_{x_{n+m-1}}\cdots A_{x_n}\bigr)\bigl(A_{x_{n-1}}\cdots A_{x_0}\bigr),
\]
and the first factor on the right-hand side is the product defining
$f_m(\sigma^n x)$. By submultiplicativity of the norm,
\[
  f_{n+m}(x) \le f_n(x) + f_m(\sigma^n x).
\]

Each $f_n$ depends only on the coordinates
$x_0,\dots,x_{n-1}$, hence is locally constant. Therefore $f_n$ is continuous
as a map into $[-\infty,\infty)$ and, in particular, upper semicontinuous.
By \eqref{eq:matrix-coercive}, $\Lambda := \sup_{i}\|A_i\| < \infty$, so
$f_n \le n\log\Lambda$ and each $f_n$ is bounded above.

Since $f_1(x) = \log\|A_{x_0}\|$, we obtain
\[
  \sup_{x\in[i]_X} f_1(x) \le \log\|A_i\| \longrightarrow -\infty
  \quad\text{as } i\to\infty ,
\]
so $f_1$ is coercive.

Suppose that $X$ contains a
periodic point $x = (www\cdots)$, where $w = w_0\cdots w_{p-1}$ is a finite
word, and that the matrix $A_w := A_{w_{p-1}}\cdots A_{w_0}$ has spectral
radius $\rho(A_w)>0$. Let $\mu_w\in\mathcal M_\sigma(X)$ be the invariant
probability measure equidistributed on the orbit of $x$. Since the limit
defining $\bar{\mathcal F}(\mu_w)$ exists, it may be computed along the
subsequence $n=kp$, and Gelfand's formula gives
\[
  \bar{\mathcal F}(\mu_w)
  = \lim_{k\to\infty} \frac{1}{kp}\log\|A_w^{k}\|
  = \frac{1}{p}\log\rho(A_w) > -\infty .
\]
Hence $\beta(\mathcal F) > -\infty$.

By Theorem~\ref{thm:subadditive} there exists
$\mu_\infty\in\mathcal M_\sigma(X)$ such that
\[
  \bar{\mathcal F}(\mu_\infty) = \beta(\mathcal F)
  = \sup_{\mu\in\mathcal M_\sigma(X)} \lim_{n\to\infty} \frac{1}{n}
    \int_X \log\bigl\|A_{x_{n-1}}\cdots A_{x_0}\bigr\|\,d\mu(x),
\]
that is, the maximal top Lyapunov exponent of the matrix cocycle over $X$ is
attained by an invariant measure.

Let $X = \Sigma$ and
$\mathcal{A} = \{A_i\}_{i\in\mathbb{N}}$, which is a bounded set of matrices.
Its joint spectral radius is
\[
  \hat\rho(\mathcal{A})
  = \lim_{n\to\infty} \sup_{i_0,\dots,i_{n-1}\in\mathbb{N}}
    \bigl\|A_{i_{n-1}}\cdots A_{i_0}\bigr\|^{1/n}.
\]
For every $\mu\in\mathcal M_\sigma(\Sigma)$ and every $n\ge1$, we have
$\bar{\mathcal F}(\mu)\le \frac1n\int f_n\,d\mu \le \frac1n\sup f_n$, whence
$\beta(\mathcal F)\le\log\hat\rho(\mathcal{A})$. Conversely, the measures
$\mu_w$ above give
$$\beta(\mathcal F)\ge \sup_{w}\frac{1}{|w|}\log\rho(A_w),$$ and by the
Berger--Wang theorem~\cite{bergerwang1992}, valid for bounded sets of
matrices, the right-hand side equals $\log\hat\rho(\mathcal{A})$. Thus
\[
  \beta(\mathcal F) = \log\hat\rho(\mathcal{A}),
\]
and Theorem~\ref{thm:subadditive} shows that the joint spectral radius of a
countable family satisfying \eqref{eq:matrix-coercive} is realized by an
invariant measure on $\Sigma$. For finite families this is known; see
\cite{morris2010}.

Let $d=1$ and
$A_i = 1-\frac{1}{i+2}$ for $i\in\mathbb{N}$. Then
$\hat\rho(\mathcal{A}) = 1$, so $\beta(\mathcal F)=0$, while
$\bar{\mathcal F}(\mu) = \int_\Sigma \log A_{x_0}\,d\mu < 0$ for every
$\mu\in\mathcal M_\sigma(\Sigma)$. Hence no maximizing measure exists, in
analogy with Example~\ref{ex:escape}.

\section{The Almost-Additive Case}\label{sec:almost}

We now consider the almost-additive case. Recall the following
definition.

\begin{definition}\label{def:almost}
A family of potentials $(f_n)_{n\geq1}$, with
$f_n\colon X\to\mathbb R$, is called almost-additive if there
exists a constant $C\geq0$ such that, for every $n,m\geq1$
and every $x\in X$,
\[
\left|f_{n+m}(x)-f_n(x)-f_m(\sigma^n x)\right|\leq C.
\]
\end{definition}

Now let $\mu\in\mathcal M_\sigma(X)$, and suppose that each
$f_n$ is bounded above. Then the integral
\[
a_n(\mu):=\int_Xf_n\,d\mu
\]
is well defined in $[-\infty,\infty)$. Define
\[
g_n=f_n+C,\qquad n\geq1.
\]
By almost-additivity,
\[
g_{n+m}
=f_{n+m}+C
\leq f_n+f_m\circ\sigma^n+2C
=g_n+g_m\circ\sigma^n.
\]
Therefore, $\mathcal G=\{g_n\}_{n\geq1}$ is subadditive.
The invariance of $\mu$ implies that the numerical sequence
\[
b_n(\mu):=\int_Xg_n\,d\mu=a_n(\mu)+C
\]
is also subadditive. By the extended version of Fekete's
lemma,
\[
\lim_{n\to\infty}\frac{b_n(\mu)}{n}
=
\inf_{n\geq1}\frac{b_n(\mu)}{n}
\in[-\infty,\infty).
\]
Since $b_n(\mu)=a_n(\mu)+C$ and $C/n\to0$, it follows
that the limit
\[
\bar{\mathcal F}(\mu)
:=
\lim_{n\to\infty}\frac1n\int_Xf_n\,d\mu
\]
also exists. More precisely,
\[
\bar{\mathcal F}(\mu)
=
\inf_{n\geq1}
\left(
\frac1n\int_Xf_n\,d\mu+\frac Cn
\right)
=
\bar{\mathcal G}(\mu).
\]
We define the maximal ergodic value of the almost-additive
family by
\[
\beta(\mathcal F)
:=
\sup_{\mu\in\mathcal M_\sigma(X)}
\bar{\mathcal F}(\mu).
\]
In particular, $\beta(\mathcal F)=\beta(\mathcal G)$.

The following result is then an immediate consequence.

\begin{theorem}\label{thm:almost}
Let $X\subset\Sigma$ be a closed subshift, and let
$\mathcal F=\{f_n\}_{n\geq1}$ be an almost-additive family
with constant $C\geq0$, whose members are upper semicontinuous
and bounded above. Suppose that $f_1$ is coercive and that
\[
\beta(\mathcal F)>-\infty.
\]
Then there exists $\mu_\infty\in\mathcal M_\sigma(X)$ such that
\[
\bar{\mathcal F}(\mu_\infty)=\beta(\mathcal F).
\]
\end{theorem}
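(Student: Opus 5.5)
The plan is to reduce Theorem~\ref{thm:almost} to Theorem~\ref{thm:subadditive} through the shifted family $\mathcal G=\{g_n\}_{n\geq1}$, $g_n=f_n+C$, introduced just before the statement. It was shown there that $\mathcal G$ is subadditive, that $\bar{\mathcal F}(\mu)=\bar{\mathcal G}(\mu)$ for every $\mu\in\mathcal M_\sigma(X)$, and hence that $\beta(\mathcal G)=\beta(\mathcal F)>-\infty$. It therefore suffices to check that $\mathcal G$ satisfies the remaining hypotheses of Theorem~\ref{thm:subadditive}. The resulting maximizing measure for $\mathcal G$ is then a maximizing measure for $\mathcal F$.

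The hypotheses are checked one at a time.
\begin{enumerate}
\item \emph{Regularity.} Each $g_n$ is upper semicontinuous, because adding a constant preserves upper semicontinuity.
\item \emph{Boundedness.} Each $g_n$ is bounded above, with $\sup_X g_n=M_n+C<\infty$.
\item \emph{Coercivity.} Since $g_1=f_1+C$, we have
\[
\sup_{x\in[i]_X}g_1(x)=\sup_{x\in[i]_X}f_1(x)+C\longrightarrow-\infty
\qquad\text{as } i\to\infty .
\]
Empty cylinders cause no trouble, since $-\infty+C=-\infty$ under the convention of the paper.
\end{enumerate}
The codomain also fits. In Definition~\ref{def:almost} the $f_n$ are real-valued, and real-valued functions are admissible in the subadditive setting, where the $f_n$ take values in $\mathbb R\cup\{-\infty\}$.

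Theorem~\ref{thm:subadditive} applied to $\mathcal G$ then produces $\mu_\infty\in\mathcal M_\sigma(X)$ with $\bar{\mathcal G}(\mu_\infty)=\beta(\mathcal G)$. Using the two identities above, this gives
\[
\bar{\mathcal F}(\mu_\infty)=\bar{\mathcal G}(\mu_\infty)=\beta(\mathcal G)=\beta(\mathcal F).
\]

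I see no real obstacle. The only delicate point is the identity $\bar{\mathcal F}=\bar{\mathcal G}$ in the degenerate case where $\int f_n\,d\mu=-\infty$. This case is already settled in the preceding discussion: in extended arithmetic $-\infty+C=-\infty$, so the limits still agree.
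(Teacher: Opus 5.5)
Your proposal is correct and is essentially the paper's own proof. You pass to the subadditive family $g_n=f_n+C$ and check that each $g_n$ is upper semicontinuous and bounded above, that $g_1$ is coercive, and that $\beta(\mathcal G)=\beta(\mathcal F)>-\infty$; then you apply Theorem~\ref{thm:subadditive} and transfer the conclusion through $\bar{\mathcal G}=\bar{\mathcal F}$.
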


\begin{proof}
Consider the family $\mathcal G=\{g_n\}_{n\geq1}$ defined
by $g_n=f_n+C$. As verified above, $\mathcal G$ is
subadditive and
\[
\bar{\mathcal G}(\mu)=\bar{\mathcal F}(\mu)
\quad\text{for every }\mu\in\mathcal M_\sigma(X).
\]
Moreover, each $g_n$ is upper semicontinuous and bounded
above. Since $g_1=f_1+C$, the coercivity of $f_1$ implies
that of $g_1$. Finally,
\[
\beta(\mathcal G)=\beta(\mathcal F)>-\infty.
\]
Applying Theorem~\ref{thm:subadditive} to $\mathcal G$,
we obtain $\mu_\infty\in\mathcal M_\sigma(X)$ such that
\[
\bar{\mathcal F}(\mu_\infty)
=
\bar{\mathcal G}(\mu_\infty)
=
\beta(\mathcal G)
=
\beta(\mathcal F).
\]
\end{proof}

\begin{remark}
The family in section \ref{sec:matrices} is, in general, neither additive nor
almost-additive. For instance, let $d=2$ and
\[
  A_0 = \begin{pmatrix} 0 & 1\\ 0 & 0\end{pmatrix},
  \qquad
  A_1 = \begin{pmatrix} 0 & 0\\ 1 & 0\end{pmatrix},
\]
with $\|\cdot\|$ the operator norm. Then $\|A_0\| = \|A_1\| = 1$, but
$A_0^2 = 0$. Hence, for $x\in[00]_\Sigma$, we get $f_1(x) + f_1(\sigma x) = 0$
while $f_2(x) = -\infty$, and no constant $C\ge0$ can satisfy the
almost-additivity condition of Definition~\ref{def:almost}. Such families are
covered by Theorem~\ref{thm:subadditive} but not by Theorem~\ref{thm:almost}.

\end{remark}

\section*{Acknowledgments}
While reviewing our theorem and writing in ChatGPT, we were introduced to the examples in section  \ref{sec:matrices} applying our results, which we are grateful.

%Almost-additivity does hold under additional
% hypotheses, for example for strictly positive matrices satisfying a uniform
% cone condition, in which case Theorem~\ref{thm:almost} also applies.

\end{document}